\newtheorem{theorem}{Theorem}[section]
\newtheorem{lem}[theorem]{Lemma}
\theoremstyle{definition}
\theoremstyle{remark}
\newtheorem{remark}[theorem]{Remark}
\numberwithin{equation}{section}
\begin{document}

\title{A $q$-analogue for Euler's $\zeta(6)=\pi^6/945$}



\author{Ankush Goswami}
\address{Department of Mathematics, University of Florida, 
Gainesville, Fl 32603}
\email{ankush04@ufl.edu}


\subjclass{11N25, 11N37, 11N60}
\keywords{$q$-analogue, triangular numbers}

\date{}

\dedicatory{In honour of Prof. George Andrews on his $80^{th}$ birthday}
\begin{abstract}
Recently, Z.-W Sun \cite{Sun} obtained $q$-analogues of Euler's formula for $\zeta(2)$ and $\zeta(4)$. Sun's formula were based on identities satisfied by triangular numbers and properties of Euler's $q$-Gamma function. In this paper, we obtain a $q$-analogue of $\zeta(6)=\pi^6/945$. Our main results are stated in Theorems 2.1 and 2.2 below.  
\end{abstract}
\maketitle
\section{Introduction}
Recently, Sun \cite{Sun} obtained a very nice $q$-analogue of Euler's formula $\zeta(2)=\pi^2/6$.  Motivated by this, the author obtained the $q$-analogue of $\zeta(4)=\pi^4/90$ and noted that it was simultaneously and independently obtained by Sun. 
Further, Sun commented that one does not know how to find a $q$-analogue of Euler's formula for $\zeta(6)$ and beyond. This further motivated the author to consider the problem and indeed we obtained the $q$-analogue of $\zeta(6)$. As we shall see shortly, the $q$-analogue formulation of $\zeta(6)$ is more difficult as compared to $\zeta(2)$ and $\zeta(4)$ due to an extra term that shows up in the identity; however in the limit as $q\rightarrow 1^{-}$, this term $\rightarrow 0$. We also state the $q$-analogue of $\zeta(4)=\pi^4/90$ since we found it independent of Sun's result, however we skip the proof of this since it essentially uses the same idea as Sun.
\section{Main theorems}
\begin{theorem}
For a complex $q$ with $|q|<1$ we have
\begin{eqnarray}\label{qaz4}
\sum_{k=0}^\infty \dfrac{q^{2k}\;P_2(q^{2k+1})}{(1-q^{2k+1})^4}=\prod_{n=1}^\infty\dfrac{(1-q^{2n})^8}{(1-q^{2n-1})^8}
\end{eqnarray}
where $P_2(x) = x^2+4x+1$. In other words, $(\ref{qaz4})$ gives a $q$-analogue of $\zeta(4)=\pi^4/90$.
\end{theorem}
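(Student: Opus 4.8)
The plan is to convert the left side of $(\ref{qaz4})$ into a Lambert series, identify the result with $\psi(q)^8$ for the theta function $\psi(q):=\sum_{n\ge0}q^{n(n+1)/2}$, and then recognize the classical ``eight triangular numbers'' identity; this parallels Sun's treatment of $\zeta(2)$ and $\zeta(4)$, with the $q\to1^-$ limit handled by Euler's $q$-Gamma function. First I would use the elementary expansion
\[
\frac{P_2(x)}{(1-x)^4}=\frac{1+4x+x^2}{(1-x)^4}=\sum_{m=1}^\infty m^3x^{m-1}.
\]
Putting $x=q^{2k+1}$, multiplying by $q^{2k}$, and using $2k+(2k+1)(m-1)=(2k+1)m-1$, this gives $\dfrac{q^{2k}P_2(q^{2k+1})}{(1-q^{2k+1})^4}=\sum_{m\ge1}m^3q^{(2k+1)m-1}$; summing over $k\ge0$ and interchanging the absolutely convergent sums ($|q|<1$) yields
\[
\sum_{k=0}^\infty\frac{q^{2k}P_2(q^{2k+1})}{(1-q^{2k+1})^4}=\frac1q\sum_{m=1}^\infty\frac{m^3q^m}{1-q^{2m}} .
\]

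On the right side, Gauss's identity $\prod_{n\ge1}\frac{1-q^{2n}}{1-q^{2n-1}}=\psi(q)$ turns the product into $\psi(q)^8$, so $(\ref{qaz4})$ is equivalent to the Lambert-series identity $\sum_{m\ge1}m^3q^m/(1-q^{2m})=q\,\psi(q)^8$. The coefficient of $q^N$ on the left here is $\sum_{d\mid N,\ N/d\ \mathrm{odd}}d^3=\sigma_3(N)-\sigma_3(N/2)$ (with $\sigma_3$ taken to vanish on non-integers), while the coefficient of $q^N$ on the right is $t_8(N-1)$, where $t_8(n)$ denotes the number of representations of $n$ as an ordered sum of eight triangular numbers. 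So the identity is exactly the classical formula $t_8(n)=\sigma_3(n+1)-\sigma_3\bigl((n+1)/2\bigr)$. I would prove this by modular forms: with $q=e^{2\pi i\tau}$ one has $\psi(q)=q^{-1/8}\eta(2\tau)^2/\eta(\tau)$, hence $q\,\psi(q)^8=\eta(2\tau)^{16}/\eta(\tau)^8$ is a holomorphic modular form of weight $4$ on $\Gamma_0(2)$; likewise $\sum_{m\ge1}m^3q^m/(1-q^{2m})=\sum_{n\ge1}\sigma_3(n)q^n-\sum_{n\ge1}\sigma_3(n)q^{2n}=\frac{1}{240}\bigl(E_4(\tau)-E_4(2\tau)\bigr)$ lies in the same space. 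Since $\dim M_4(\Gamma_0(2))=2$, it suffices to match a few low-order coefficients (the coefficients of $q^0,q^1,q^2$ are $0,1,8$ on both sides), which completes the proof of $(\ref{qaz4})$. Alternatively one may simply invoke the classical formula for the number of representations of an integer as a sum of eight triangular numbers.

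Finally, to justify calling $(\ref{qaz4})$ a $q$-analogue of $\zeta(4)=\pi^4/90$, I would multiply both sides by $(1-q)^4$ and let $q\to1^-$ through $(0,1)$. On the right, the $q$-Gamma evaluation $\psi(q)=(1-q^2)^{-1/2}\,\Gamma_{q^2}(1/2)$ together with $\Gamma_{q^2}(1/2)\to\Gamma(1/2)=\sqrt{\pi}$ gives $(1-q)^4\psi(q)^8=(1+q)^{-4}\Gamma_{q^2}(1/2)^8\to\pi^4/16$; on the left, in each summand $(1-q)/(1-q^{2k+1})\to1/(2k+1)$, $q^{2k}\to1$ and $P_2(q^{2k+1})\to P_2(1)=6$, so the limit is $6\sum_{k\ge0}(2k+1)^{-4}=6\cdot\frac{15}{16}\zeta(4)$, and equating the two limits returns $\zeta(4)=\pi^4/90$. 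The one genuine obstacle is the identity $\sum m^3q^m/(1-q^{2m})=q\psi(q)^8$ (equivalently the eight-triangular-number formula); the Lambert-series reduction is routine algebra, and the $q\to1^-$ passage requires only a standard Abelian argument to justify taking the limit termwise, just as in Sun's work.
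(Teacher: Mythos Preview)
Your proposal is correct. Note, however, that the paper does not actually prove Theorem~2.1: it explicitly skips the proof, saying only that it ``essentially uses the same idea as Sun.'' Judged against that implied template---and against the paper's detailed proof of Theorem~2.2---your argument is exactly the intended one: expand the summand as a power series to reach a divisor-sum generating function, then match it with $\psi(q)^8$ via the classical formula for $t_8(n)$, the eight-triangular-number analogue of the Ono--Robins--Wahl result the paper invokes for $t_{12}$. Your use of the closed form $P_2(x)/(1-x)^4=\sum_{m\ge1}m^3x^{m-1}$ replaces the partial-fractions-plus-binomial expansion the paper carries out in the $\zeta(6)$ case, but the two computations are equivalent. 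The one genuine addition in your write-up is a self-contained modular-forms verification of the $t_8$ identity via $\dim M_4(\Gamma_0(2))=2$, where the paper would simply cite \cite{ORW}; your $q\to1^-$ limit through the $q$-Gamma function is exactly the device the paper uses in~(\ref{EG}).
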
 
\begin{theorem}
For a complex $q$ with $|q|<1$ we have
\begin{eqnarray}\label{qaz6}
\sum_{k=0}^\infty \dfrac{q^{k}(1+q^{2k+1})\;P_4(q^{2k+1})}{(1-q^{2k+1})^6}-\phi^{12}(q)=256q\prod_{n=1}^\infty\dfrac{(1-q^{2n})^{12}}{(1-q^{2n-1})^{12}}
\end{eqnarray}
where $P_4(x) = x^4+236x^3+1446x^2+236x+1$ and $\phi(q)=\displaystyle\prod_{n=1}^\infty (1-q^n)$ is the Euler's function. In other words, $(\ref{qaz6})$ gives a $q$-analogue of $\zeta(6)=\pi^6/945$.
\end{theorem}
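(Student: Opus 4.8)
\noindent\emph{Proof proposal.} The plan is to collapse the sum on the left of $(\ref{qaz6})$ to a classical divisor sum, to recognise the resulting identity as a relation between modular forms of weight $6$, and finally to extract $\zeta(6)=\pi^6/945$ from the behaviour as $q\to 1^-$.

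\textbf{Step 1: reducing the sum.} First I would establish the rational-function identity
\[
\frac{(1+x)\,P_4(x)}{(1-x)^6}=\sum_{m\ge 0}(2m+1)^5\,x^m,
\]
for instance by writing $(2m+1)^5=\sum_{j=0}^{5}\binom{5}{j}2^{j}m^{j}$, inserting the Eulerian-polynomial expressions for $\sum_{m\ge 0}m^{j}x^{m}$, and clearing the common denominator $(1-x)^6$; both numerators of degree $5$ then equal $1+237x+1682x^2+1682x^3+237x^4+x^5$. Substituting $x=q^{2k+1}$, multiplying by $q^k$, interchanging the summations and using $k(2m+1)+m=\tfrac12\bigl((2m+1)(2k+1)-1\bigr)$, the left side of $(\ref{qaz6})$ before subtracting $\phi^{12}$ becomes
\[
\sum_{k,m\ge 0}(2m+1)^5\,q^{\frac{(2m+1)(2k+1)-1}{2}}=\sum_{N\ge 0}\sigma_5(2N+1)\,q^N,
\]
where $\sigma_5(n)=\sum_{d\mid n}d^5$ and the double sum is grouped by the odd product $(2m+1)(2k+1)=2N+1$. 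Hence $(\ref{qaz6})$ is equivalent to
\[
\sum_{N\ge 0}\sigma_5(2N+1)\,q^N=256\,q\,\psi(q)^{12}+\phi(q)^{12},\qquad \psi(q):=\prod_{n=1}^{\infty}\frac{1-q^{2n}}{1-q^{2n-1}}.
\]

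\textbf{Step 2: the modular identity.} Replacing $q$ by $q^2$ and multiplying through by $q$, each of the three series is, by the usual dictionary between $q$-series and Fourier expansions, a holomorphic modular form of weight $6$ for a congruence subgroup (one may take $\Gamma_0(8)$): the divisor sum $\sum_{m\ \mathrm{odd}}\sigma_5(m)q^m$ is an Eisenstein series, a fixed rational combination of $E_6(\tau)$, $E_6(2\tau)$, $E_6(4\tau)$; the product $q\,\phi(q^2)^{12}=\eta(2\tau)^{12}$ is a cusp form; and $q^3\psi(q^2)^{12}=2^{-12}\vartheta_2(q)^{12}$ is the twelfth power of the theta constant $\vartheta_2(q)=2\sum_{n\ge 0}q^{(n+1/2)^2}$, which lies in the same space. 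Since that space is finite-dimensional, the identity reduces to verifying finitely many Fourier coefficients; in fact, once $\vartheta_2(q)^{12}$ is known to lie in the two-dimensional subspace spanned by $\sum_{m\ \mathrm{odd}}\sigma_5(m)q^m$ and $\eta(2\tau)^{12}$, matching just the coefficients of $q^1$ and $q^3$ already forces the constants $256$ and $1$. Equivalently, this is the classical formula for the number of representations of an integer as a sum of twelve triangular numbers; one could also argue entirely through Euler's $q$-Gamma function, as in Sun's treatment of $\zeta(2)$ and $\zeta(4)$, by writing the Lambert sum via derivatives of $\log\Gamma_{q^2}$ at $x=\tfrac12$ and the product via $\Gamma_{q^2}(\tfrac12)$. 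I expect this step, and in particular the appearance of the cusp form, to be the main obstacle: for the $\zeta(2)$ and $\zeta(4)$ analogues the relevant spaces carry no cusp forms, so those identities are forced by a single leading coefficient, whereas at weight $6$ a one-dimensional space of cusp forms intervenes and one must identify its contribution as precisely $\phi^{12}$ (equivalently $\eta(2\tau)^{12}$) with the correct constant $256$ in front of $\psi^{12}$.

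\textbf{Step 3: the limit $q\to 1^-$.} Here $\psi(q)=\sum_{n\ge 0}q^{n(n+1)/2}\sim\sqrt{\pi/\bigl(2(1-q)\bigr)}$, so the right side of $(\ref{qaz6})$ satisfies $256\,q\,\psi(q)^{12}\sim 4\pi^6(1-q)^{-6}$, while $\phi(q)^{12}\to 0$ faster than any power of $1-q$ by the transformation $\eta(-1/\tau)=\sqrt{-i\tau}\,\eta(\tau)$, so the cusp-form term disappears in the limit. On the left, $q^k\to 1$, $1+q^{2k+1}\to 2$, $P_4(q^{2k+1})\to P_4(1)=1920$ and $1-q^{2k+1}\sim(2k+1)(1-q)$ for each fixed $k$, and a routine estimate (splitting the sum at $k$ of order $(1-q)^{-1}$) gives
\[
\sum_{k\ge 0}\frac{q^k(1+q^{2k+1})\,P_4(q^{2k+1})}{(1-q^{2k+1})^6}\sim\frac{2\cdot 1920}{(1-q)^6}\sum_{k\ge 0}\frac{1}{(2k+1)^6}=\frac{3840}{(1-q)^6}\Bigl(1-\tfrac{1}{64}\Bigr)\zeta(6).
\]
Comparing leading terms yields $\tfrac{3840\cdot 63}{64}\,\zeta(6)=4\pi^6$, that is $\zeta(6)=\pi^6/945$, which is the precise sense in which $(\ref{qaz6})$ is a $q$-analogue of Euler's formula.
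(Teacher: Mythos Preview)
Your proposal is correct and follows the same three-step architecture as the paper: collapse the Lambert-type sum to $\sum_{N\ge 0}\sigma_5(2N+1)\,q^N$, identify the resulting relation as the modular identity linking this divisor series to $\psi^{12}$ and $\phi^{12}$, and then pass to the limit $q\to 1^-$. The execution differs only in detail. In Step~1 the paper writes $S_6(q)$ by partial fractions and expands each $(1-q^{2k+1})^{-r}$ via negative binomial coefficients, then checks that the resulting polynomial in $j$ collapses to $(2j+1)^5$; your route through the Eulerian-polynomial identity $\frac{(1+x)P_4(x)}{(1-x)^6}=\sum_{m\ge 0}(2m+1)^5x^m$ is the same computation organised more efficiently. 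In Step~2 the paper simply quotes the Ono--Robins--Wahl formula $t_{12}(n)=\bigl(\sigma_5(2n+3)-a(2n+3)\bigr)/256$, with $a(\cdot)$ the Fourier coefficients of $\eta^{12}(2\tau)$, as a black box; you instead sketch the modular-forms dimension argument that underlies that formula. Both reach the identical conclusion, and your limiting argument in Step~3, including the vanishing of $(1-q)^6\phi^{12}(q)$, matches the paper's use of the $q$-Gamma evaluation $(1-q)\,\psi(q)^2\to\pi/2$.
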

\begin{remark}
We note that $\phi^{12}(q)$ has a beautiful product representation and from (\ref{qaz6}), it is uniquely determined by 
\begin{eqnarray}\label{cusp}
\phi^{12}(q)=\sum_{k=0}^\infty \dfrac{q^{k}(1+q^{2k+1})\;P_4(q^{2k+1})}{(1-q^{2k+1})^6}-256q\prod_{n=1}^\infty\dfrac{(1-q^{2n})^{12}}{(1-q^{2n-1})^{12}}.
\end{eqnarray} 
In the general $q$-analogue formulation (see \cite{Gos}), we do not have very elegant representations of these functions, although we obtain expressions for them similar to (\ref{cusp}). 
\end{remark}
\begin{remark}
Since the coefficients in the $q$-series expansion of $\phi^{12}(q)$ are related to the pentagonal numbers by Euler's pentagonal number theorem and that for the product in the right-hand side of (\ref{qaz6}) are related to the triangular numbers, it will be worthwhile to understand the relationships of these coefficients via identity (\ref{qaz6}).
\end{remark}
\section{Some useful lemmas}
Let $q=e^{2\pi i\tau}$, $\tau\in\mathcal{H}$ where $\mathcal{H}=\{\tau\in\mathbb{C} : \mbox{Im}(\tau)>0$\}. Then the Dedekind $\eta$-function defined below is a modular form of weight $\frac{1}{2}$  
\begin{eqnarray}\label{D_eta}
\eta(\tau) = q^{1/24}\prod_{n=1}^\infty (1-q^n).
\end{eqnarray}
Also, let us denote by $\psi(q)$ the following sum.
\begin{eqnarray}
\psi(q)=\sum_{n=0}^\infty q^{T_n}
\end{eqnarray}
where $T_n=\dfrac{n(n+1)}{2}$ (for $n=0,1,2,..$) are triangular numbers. Then we have the following well-known result due to Gauss 
\begin{lem}\label{Gauss}
\begin{eqnarray}
\psi(q)=\prod_{n=1}^\infty \dfrac{(1-q^{2n})}{(1-q^{2n-1})}.
\end{eqnarray}
\end{lem}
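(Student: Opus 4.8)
The plan is to obtain Gauss's identity as a specialization of the Jacobi triple product (JTP), which I take as a known fact: for $|q|<1$ and $z\neq 0$,
\[
\sum_{n=-\infty}^{\infty} z^{n} q^{n^{2}} = \prod_{n=1}^{\infty}(1-q^{2n})(1+z q^{2n-1})(1+z^{-1}q^{2n-1}).
\]
First I would replace $q$ by $q^{1/2}$ and then set $z=q^{1/2}$. On the left this produces $\sum_{n\in\mathbb{Z}} q^{(n^{2}+n)/2} = \sum_{n\in\mathbb{Z}} q^{T_{n}}$, in which all half-integer powers of $q$ have cancelled, so the series is an honest power series in $q$, absolutely convergent for $|q|<1$, with no branch ambiguity. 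Since $T_{-1-n}=T_{n}$ for every integer $n$ and $n\mapsto -1-n$ maps $\{0,1,2,\dots\}$ bijectively onto $\{-1,-2,-3,\dots\}$, this bilateral sum equals $2\sum_{n\ge 0} q^{T_{n}} = 2\psi(q)$.

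Next I would simplify the right-hand side under the same specialization. The factors become $(1-q^{n})(1+q^{n})(1+q^{n-1})$, and separating out the $n=1$ contribution $1+q^{0}=2$ gives
\[
2\prod_{n=1}^{\infty}(1-q^{n})(1+q^{n})^{2}.
\]
Equating the two sides and cancelling the common factor $2$ yields the intermediate identity $\psi(q) = \prod_{n\ge 1}(1-q^{n})(1+q^{n})^{2}$. Finally I would convert this into the asserted quotient form by routine manipulations: $1+q^{n}=(1-q^{2n})/(1-q^{n})$ turns each factor into $(1-q^{2n})^{2}/(1-q^{n})$, and the even/odd splitting $\prod_{n\ge 1}(1-q^{n}) = \prod_{n\ge 1}(1-q^{2n})\prod_{n\ge 1}(1-q^{2n-1})$ then collapses the product to $\prod_{n\ge 1}(1-q^{2n})/(1-q^{2n-1})$, which is the claim.

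I do not anticipate a genuine obstacle here. The only points needing a little care are the substitution $z=q^{1/2}$ in JTP — legitimate because JTP is an identity of functions holomorphic in $z$ on $\mathbb{C}^{\times}$, and after specialization only integer powers of $q$ survive — and keeping track of the stray factor $2$ that appears symmetrically on both sides. If one prefers to avoid invoking JTP, an alternative is a bijective argument: interpret $\prod_{n\ge 1}(1-q^{2n})/(1-q^{2n-1})$ as the generating function of a suitable class of partitions and exhibit a bijection with the objects enumerated by $\sum_{n\ge 0} q^{T_{n}}$. The JTP route is shorter and self-contained modulo one classical identity, so I would present that one.
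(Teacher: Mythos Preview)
Your derivation is correct: the specialization of the Jacobi triple product with $q\mapsto q^{1/2}$, $z=q^{1/2}$ gives $2\psi(q)=2\prod_{n\ge1}(1-q^{n})(1+q^{n})^{2}$, and your subsequent manipulations to reach $\prod_{n\ge1}(1-q^{2n})/(1-q^{2n-1})$ are clean and accurate. The only cosmetic point is that one could equally well avoid the half-power substitution by setting $z=q$ directly in the standard JTP and then replacing $q^{2}$ by $q$ at the end, but what you wrote is fine since, as you note, only integer powers of $q$ survive.

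As for comparison with the paper: there is nothing to compare. The paper does not prove this lemma at all; it is simply quoted as ``the following well-known result due to Gauss'' and used as input. Your JTP argument is the standard textbook proof of Gauss's identity, so you are supplying a proof where the paper merely cites one.
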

Thus we have from Lemma $(\ref{Gauss})$ that 
\begin{eqnarray}
\prod_{n=1}^\infty\dfrac{(1-q^{2n})^{12}}{(1-q^{2n-1})^{12}} = \psi^{12}(q) = \sum_{n=1}^\infty t_{12}(n) q^n
\end{eqnarray} 
where $t_{12}(n)$ is the number of ways of representing a positive integer $n$ as a sum of 12 triangular numbers. Next, we have the following well-known result of Ono, Robins and Wahl \cite{ORW}.
\begin{theorem}
Let, $\eta^{12}(2\tau)=\displaystyle\sum_{k=0}^\infty a(2k+1)q^{2k+1}$ then for a positive integer $n$ we have
\begin{eqnarray}
t_{12}(n) = \dfrac{\sigma_5(2n+3)-a(2n+3)}{256}
\end{eqnarray}
where
\begin{eqnarray}
\sigma_5(n) = \sum_{d|n}d^5.
\end{eqnarray} 
\end{theorem}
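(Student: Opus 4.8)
The plan is to recognise $\psi^{12}(q)$ as an eta-quotient and then run the Eisenstein-plus-cuspidal decomposition of the space of weight-$6$ modular forms on $\Gamma_0(4)$. First I would combine the elementary factorisation $\phi(q)=\prod_{n\ge1}(1-q^{2n})\prod_{n\ge1}(1-q^{2n-1})$ with Gauss's identity (Lemma~\ref{Gauss}) to write $\psi(q)=\prod_{n\ge1}(1-q^{2n})^{2}/\phi(q)$, and then use the definition $(\ref{D_eta})$ of $\eta$ to turn this into $\psi(q)=q^{-1/8}\,\eta(2\tau)^{2}/\eta(\tau)$, whence $\psi^{12}(q)=q^{-3/2}\,\eta(2\tau)^{24}/\eta(\tau)^{12}$. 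Replacing $\tau$ by $2\tau$ and clearing the power of $q$, one gets
\[
F(\tau):=\frac{\eta(4\tau)^{24}}{\eta(2\tau)^{12}}=\sum_{n\ge 0}t_{12}(n)\,q^{2n+3},\qquad t_{12}(0):=1 .
\]
By the standard eta-quotient criteria (Ligozat, Newman) applied to the exponents $(r_2,r_4)=(-12,24)$ — one checks $2r_2+4r_4=72\equiv 0$ and $2r_2+r_4=0\equiv 0\pmod{24}$, and that $2^{r_2}4^{r_4}=2^{36}$ is a perfect square — $F$ has weight $6$ and trivial character on $\Gamma_0(4)$; moreover the order of vanishing of $F$ at each of the three cusps $\infty,0,\tfrac12$ of $\Gamma_0(4)$ is nonnegative (it equals $3$ at $\infty$ and $0$ at the other two), so in fact $F\in M_6(\Gamma_0(4))$.

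Since $\Gamma_0(4)$ has genus $0$, no elliptic points, and three cusps, the dimension formulas give $\dim M_6(\Gamma_0(4))=4$ and $\dim S_6(\Gamma_0(4))=1$. The Eisenstein subspace is spanned by $E_6(\tau),E_6(2\tau),E_6(4\tau)$, where $E_6(\tau)=1-504\sum_{m\ge1}\sigma_5(m)q^m$, and — again by the eta-quotient criteria — the one-dimensional cusp space is spanned by $\eta^{12}(2\tau)=\sum_{k\ge0}a(2k+1)q^{2k+1}$. Hence there are constants $\alpha,\beta,\gamma,\delta$ with
\[
F(\tau)=\alpha\,E_6(\tau)+\beta\,E_6(2\tau)+\gamma\,E_6(4\tau)+\delta\,\eta^{12}(2\tau).
\]
Comparing the coefficients of $q^0,q^1,q^2,q^3$ on both sides, and using $t_{12}(0)=1$, $a(1)=1$, $a(3)=-12$, $\sigma_5(1)=1$, $\sigma_5(2)=33$, $\sigma_5(3)=244$ (together with the fact that $\eta^{12}(2\tau)$ has only odd exponents), one gets a triangular linear system whose solution is $\alpha=-\tfrac{1}{504\cdot 256}$ and $\delta=-\tfrac{1}{256}$; the values of $\beta,\gamma$ will not be needed.

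Finally, for every $n\ge1$ the exponent $2n+3$ is odd and at least $5$, so $E_6(2\tau)$ and $E_6(4\tau)$ — whose nonzero coefficients sit at even exponents, respectively at multiples of $4$ — contribute nothing to the coefficient of $q^{2n+3}$ in $F$. Reading off that coefficient and recalling $-504\alpha=\tfrac{1}{256}$ gives
\[
t_{12}(n)=-504\alpha\,\sigma_5(2n+3)+\delta\,a(2n+3)=\frac{\sigma_5(2n+3)-a(2n+3)}{256},
\]
which is the desired identity (and the same computation at $q^3$ recovers the case $n=0$ as a consistency check). The main obstacle is the modular-forms input: one must verify that $F$ — and likewise $\eta^{12}(2\tau)$ — is genuinely holomorphic at every cusp, which is the order computation at the cusps $0$ and $\tfrac12$, and, above all, that $S_6(\Gamma_0(4))$ is exactly one-dimensional, since it is precisely this dimension count that forces the cuspidal part of $F$ to be a single scalar multiple of $\eta^{12}(2\tau)$ and thereby makes the coefficient comparison conclusive. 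Once these facts are granted, everything else is routine bookkeeping with $q$-expansions.
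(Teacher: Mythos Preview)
Your argument is correct. The eta-quotient identity $\psi(q)=q^{-1/8}\eta(2\tau)^{2}/\eta(\tau)$, the verification that $F(\tau)=\eta(4\tau)^{24}/\eta(2\tau)^{12}\in M_6(\Gamma_0(4))$, the dimension count $\dim M_6(\Gamma_0(4))=4$, $\dim S_6(\Gamma_0(4))=1$, and the coefficient comparison all check out; in particular the linear system from the coefficients of $q^0,q^1,q^2,q^3$ does yield $-504\alpha=1/256$ and $\delta=-1/256$, and since $2n+3$ is odd the oldforms $E_6(2\tau),E_6(4\tau)$ drop out as you say.

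However, there is nothing in the paper to compare against: the paper does not prove this statement at all. It is quoted as a ``well-known result of Ono, Robins and Wahl'' with a citation to \cite{ORW}, and is then used as a black box in the proof of Theorem~2.2. What you have written is, in outline, exactly the argument Ono--Robins--Wahl give in that reference (Eisenstein-plus-cusp decomposition on $\Gamma_0(4)$, with $\eta(2\tau)^{12}$ spanning the cuspidal part), so your proposal supplies the proof that the present paper deliberately omits rather than offering an alternative to anything in it.
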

\section{Proof of Theorem 2.2}
Since $\zeta(6)=\dfrac{\pi^6}{945}$ has the following equivalent form
\begin{eqnarray}
\sum_{k=0}^\infty \dfrac{1}{(2k+1)^6} = \dfrac{63}{64}\zeta(6) = \dfrac{\pi^6}{960},
\end{eqnarray}
it will be sufficient to get the $q$-analogue of (4.1). Now, from $q$-analogue of Euler's Gamma function we know that 
\begin{eqnarray}\label{EG}
\lim_{q\uparrow 1} \;(1-q)\prod_{n=1}^\infty\dfrac{(1-q^{2n})^2}{(1-q^{2n-1})^2}=\dfrac{\pi}{2}
\end{eqnarray}
so that from $(\ref{EG})$ we have
\begin{eqnarray}
\lim_{q\uparrow 1} \;(1-q)^6\prod_{n=1}^\infty\dfrac{(1-q^{2n})^{12}}{(1-q^{2n-1})^{12}}=\dfrac{\pi^6}{64}.
\end{eqnarray} 
Next, we consider the following infinite series 
\begin{eqnarray}
S_6(q) := \sum_{k=0}^\infty \dfrac{q^{k}(1+q^{2k+1})\;P_4(q^{2k+1})}{(1-q^{2k+1})^6}
\end{eqnarray}
where $P_4(x)=x^4+236x^3+1446x^2+236x+1$.

By partial fractions we have 
\begin{eqnarray}\label{P6}
S_6(q) = \sum_{k=0}^\infty q^k\left\{\dfrac{3840}{(1-q^{2k+1})^6}-\dfrac{9600}{(1-q^{2k+1})^5}+\dfrac{8160}{(1-q^{2k+1})^4}\right.\nonumber\\\left.-\dfrac{2640}{(1-q^{2k+1})^3}+\dfrac{242}{(1-q^{2k+1})^2}-\dfrac{1}{(1-q^{2k+1})}\right\}.
\end{eqnarray}
\begin{lem}
With $S_6(q)$ represented by $(\ref{P6})$ we have
\begin{eqnarray}\label{S6}
S_6(q) = 256q\sum_{n=0}^\infty t_{12}(n)q^n + \phi^{12}(q).
\end{eqnarray}
\end{lem}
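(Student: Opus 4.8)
The plan is to expand $S_6(q)$, in the partial-fraction form $(\ref{P6})$, into a power series in $q$, identify its coefficients as an odd divisor sum, and then match the result against the right-hand side of $(\ref{S6})$ by means of the Ono--Robins--Wahl theorem quoted above. First I would expand each summand by the binomial series $1/(1-x)^j=\sum_{m\ge 0}\binom{m+j-1}{j-1}x^m$, which is legitimate for $|q|<1$; writing $c_6=3840$, $c_5=-9600$, $c_4=8160$, $c_3=-2640$, $c_2=242$, $c_1=-1$ and $g(m):=\sum_{j=1}^{6}c_j\binom{m+j-1}{j-1}$, and interchanging the (absolutely convergent) double sum, this gives $S_6(q)=\sum_{k,m\ge 0}g(m)\,q^{\,k(2m+1)+m}$. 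The decisive observation is the elementary identity $2\bigl(k(2m+1)+m\bigr)+1=(2k+1)(2m+1)$: collecting the terms with a fixed exponent $N=k(2m+1)+m$ therefore amounts to running over the factorizations of the odd integer $2N+1$, and the pairs $(k,m)$ with $(2k+1)(2m+1)=2N+1$ are in bijection with the divisors $d\mid 2N+1$ via $d=2m+1$. Hence $S_6(q)=\sum_{N\ge 0}\bigl(\sum_{d\mid 2N+1}g(\frac{d-1}{2})\bigr)q^{N}$.

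Next I would observe that $g(m)=(2m+1)^5$ as polynomials in $m$ --- equivalently $\sum_{m\ge 0}(2m+1)^5x^m=(1+x)P_4(x)/(1-x)^6$, which is exactly the partial-fraction computation underlying $(\ref{P6})$; since both sides are polynomials of degree $5$ in $m$, it suffices to check six values. Consequently $\sum_{d\mid 2N+1}g(\frac{d-1}{2})=\sum_{d\mid 2N+1}d^5=\sigma_5(2N+1)$, and we arrive at the clean identity $S_6(q)=\sum_{N\ge 0}\sigma_5(2N+1)\,q^{N}=1+\sum_{N\ge 1}\sigma_5(2N+1)q^{N}$, using $\sigma_5(1)=1$.

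Finally I would bring in the Ono--Robins--Wahl formula. Since $\eta^{12}(2\tau)=q\prod_{n\ge 1}(1-q^{2n})^{12}=q\,\phi^{12}(q^2)$, comparison with $\eta^{12}(2\tau)=\sum_{k\ge 0}a(2k+1)q^{2k+1}$ and the substitution $q^2\mapsto q$ yield $\phi^{12}(q)=\sum_{k\ge 0}a(2k+1)q^{k}$; in particular $a(1)=1$ and $\sum_{n\ge 0}a(2n+3)q^{n+1}=\phi^{12}(q)-1$. The stated formula $t_{12}(n)=\bigl(\sigma_5(2n+3)-a(2n+3)\bigr)/256$, which one checks also holds at $n=0$ (where $t_{12}(0)=1$, $\sigma_5(3)=244$, $a(3)=-12$), gives $256q\sum_{n\ge 0}t_{12}(n)q^n=\sum_{n\ge 0}\sigma_5(2n+3)q^{n+1}-\sum_{n\ge 0}a(2n+3)q^{n+1}$. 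Here $\sum_{n\ge 0}\sigma_5(2n+3)q^{n+1}=\sum_{N\ge 1}\sigma_5(2N+1)q^{N}=S_6(q)-1$ by the previous paragraph, so subtracting gives $256q\sum_{n\ge 0}t_{12}(n)q^n=(S_6(q)-1)-(\phi^{12}(q)-1)=S_6(q)-\phi^{12}(q)$, which is $(\ref{S6})$.

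Each step is elementary; I expect the only delicate points to be the justification of the rearrangement of the double series (routine, via absolute convergence for $|q|<1$) and the careful tracking of index shifts among $t_{12}$, $a(\cdot)$, $\sigma_5$ and the powers of $q$ --- in particular confirming that the Ono--Robins--Wahl identity is applied over the correct range and that the constant terms match. The one genuinely structural input is the exponent identity $2N+1=(2k+1)(2m+1)$, which is what converts the partial-fraction expansion $(\ref{P6})$ into the odd divisor sum $\sigma_5(2N+1)$.
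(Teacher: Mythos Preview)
Your proof is correct and follows essentially the same route as the paper: expand each term of $(\ref{P6})$ by the binomial series, collapse the resulting polynomial in the inner index to $(2m+1)^5$, use the exponent identity $(2k+1)(2m+1)=2N+1$ to obtain $S_6(q)=\sum_{N\ge 0}\sigma_5(2N+1)q^N$, express $\phi^{12}(q)=\sum_{k\ge 0}a(2k+1)q^k$ via $\eta^{12}(2\tau)$, and finish with the Ono--Robins--Wahl formula. Your explicit check of the case $n=0$ (where the paper's Theorem~3.2 is only stated for positive $n$) is a small but genuine improvement in rigor over the paper's own argument.
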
 

\begin{proof}
From $(\ref{P6})$ we have 
\begin{eqnarray}
S_6(q) &=& \sum_{k=0}^\infty\sum_{j=0}^\infty q^{k}\left\{3840\binom{-6}{j}-9600\binom{-5}{j}+8160\binom{-4}{j}\right.\nonumber\\&&\left.-2640\binom{-3}{j}+242\binom{-2}{j}-\binom{-1}{j}\right\}(-q)^{j(2k+1)}\nonumber\\
&=& \sum_{k=0}^\infty\sum_{j=0}^\infty \left\{32(j+1)(j+2)(j+3)(j+4)(j+5)\right.\nonumber\\&&\left.-400(j+1)(j+2)(j+3)(j+4)+1360(j+1)(j+2)(j+3)\right.\nonumber\\&&\left.-1320(j+1)(j+2)+242(j+1)-1\right\}q^{k+j(2k+1)}\nonumber\\
&=& \sum_{k=0}^\infty\sum_{j=0}^\infty (2j+1)^5 q^{\frac{(2j+1)(2k+1)-1}{2}}\nonumber\\
&=& \sum_{n=0}^\infty \sigma_5(2n+1)q^n\nonumber\\
&=& 1 + \sum_{n=1}^\infty \sigma_5(2n+1)q^n\nonumber\\
&=& 1 + q\sum_{n=0}^\infty \sigma_5(2n+3)q^{n}.\nonumber 
\end{eqnarray}
Also from $(\ref{D_eta})$ we have
\begin{eqnarray}
\phi^{12}(q) &=& \dfrac{\eta^{12}(\tau)}{q^{\frac{1}{2}}}\nonumber\\
&=& \sum_{n=0}^\infty a(2n+1)q^n\nonumber\\
&=& 1+\sum_{n=1}^\infty a(2n+1)q^n\nonumber\\
&=& 1+q\sum_{n=0}^\infty a(2n+3)q^n.\nonumber
\end{eqnarray} 
Thus from above we have
\begin{eqnarray*}
S_6(q)-\phi^{12}(q) &=& q\sum_{n=0}^\infty \left\{\sigma_5(2n+3)-a(2n+3)\right\}q^n\\
&=& 256\;q\sum_{n=0}^\infty t_{12}(n) q^n
\end{eqnarray*}
where the last step follows from Theorem 3.2. This completes the proof of Theorem 2.2.
\end{proof}\mbox{}\\
We also note that 
\begin{eqnarray}
\lim_{q\uparrow 1}\;(1-q)^6 (S_6(q)-\phi^{12}(q)) &=& \lim_{q\uparrow 1}\;(1-q)^6 S_6(q)-\lim_{q\uparrow 1}\;(1-q)^6\phi^{12}(q)\nonumber\\
&=&\sum_{k=0}^\infty \dfrac{3840}{(2k+1)^6}
\end{eqnarray}
where $\lim_{q\uparrow 1}\;(1-q)^6\phi^{12}(q)=0$ and $q\uparrow 1$ indicates $q\rightarrow 1$ from within the unit disk. Hence, combining equations (4.1), (4.3), (4.7) and Lemma 4.1, Theorem 2.2 follows. 
\section{Acknowledgement}
I am grateful to Prof. Krishnaswami Alladi for carrying out discussions pertaining to the function $\phi(q)$ and for his encouragement. I also thank Prof. George Andrews for going through my proof and providing me a few useful references. 
\end{document}